\newenvironment{items}{
\begin{list}{$\alph{item})$}
{\labelwidth100pt \leftmargin30pt \topsep0pt \itemsep2pt \parsep0pt}}
{\end{list}}
\newtheoremstyle{mythm}
{6pt}
{6pt}
{\it}
{}
{\bf}
{.}
{0.5em}
{}
\newtheoremstyle{mydef}
{6pt}
{6pt}
{}
{}
{\bf}
{.}
{.5em}
{}
\newtheoremstyle{myrem}
{6pt}
{6pt}
{}
{}
{\bf}
{.}
{.5em}
{}
 \theoremstyle{mythm}
 \newtheorem{theorem}{Theorem}[section]
\newtheorem{proposition}[theorem]{Proposition}
 \newtheorem{lemma}[theorem]{Lemma}
 \newtheorem{corollary}[theorem]{Corollary}
 \theoremstyle{myrem}
 \newcommand{\g}{\mathfrak{g}}
\renewcommand{\sl}{\mathfrak{sl}}
 \newcommand{\h}{\mathfrak{h}}
 \renewcommand{\t}{\mathfrak{t}}
\renewcommand{\geq}{\geqslant}
\newcommand{\CC}{\mathbbm{C}}
\newcommand{\ZZ}{\mathbbm{Z}}
\newcommand{\NN}{\mathbbm{N}}
\newcommand{\ad}{\mathrm{ad}}
\newcommand{\Ad}{\mathrm{Ad}}
\newcommand{\GL}{\group{GL}}
\newcommand{\SL}{\group{SL}}
\newcommand{\Zen}{\mathrm{Z}}
\newcommand{\Nor}{\mathrm{N}}
\newcommand{\group}{\mathrm}
\renewcommand{\rho}{\varrho}
\renewcommand{\epsilon}{\varepsilon}
\newcommand{\Z}{\mathbb{Z}}
\DeclareMathOperator{\rk}{\mathrm{rk}}
\DeclareMathOperator{\Span}{\mathrm{span}}
\newcommand{\frg}{\mathfrak{g}}
\newcommand{\frh}{\mathfrak{h}}
\newcommand{\fra}{\mathfrak{a}}
\newcommand{\frb}{\mathfrak{b}}
\newcommand{\frc}{\mathfrak{c}}
\newcommand{\frn}{\mathfrak{n}}
\newcommand{\frz}{\mathfrak{z}}
\newcommand{\frm}{\mathfrak{m}}
\newcommand{\frs}{\mathfrak{s}}
\newcommand{\frp}{\mathfrak{p}}
\newcommand{\frq}{\mathfrak{q}}
\newcommand{\frt}{\mathfrak{t}}
\newcommand{\zen}{\mathfrak{z}}
\newcommand{\gl}{\mathfrak{gl}}
\renewcommand{\sl}{\mathfrak{sl}}
\newcommand{\so}{\mathfrak{so}}
\renewcommand{\sp}{\mathfrak{sp}}
\renewcommand{\phi}{\varphi}
\numberwithin{equation}{section}
\newcounter{ithmcount}
\renewcommand{\geq}{\geqslant}
\renewcommand{\leq}{\leqslant}
\newcommand{\diag}{{\rm diag}}
\begin{document}

%

 \author[Dietrich]{Heiko Dietrich}
 \address[H.\ Dietrich, M.\ Origlia]{Monash University,
 School of Mathematics,
 Clayton, VIC 3800, Australia}
 \email{heiko.dietrich@monash.edu}

 \author[Globke]{Wolfgang Globke}
 \address[W.\ Globke]{Faculty of Mathematics,
   University of Vienna,
   1090 Vienna, Austria}
 \email{wolfgang.globke@univie.ac.at}

 \author[Origlia]{Marcos Origlia}
 \email{marcos.origlia@monash.edu}
\dedicatory{Dedicated to the memory of Professor \`E.\ B.\ Vinberg}

\thanks{Dietrich and Origlia were supported by Australian Research Council grant DP190100317; Globke was supported by an Austrian Science Fund FWF grant I 3248.}

\title{A note on \'etale representations from nilpotent orbits}

\begin{abstract}

A linear \'etale representation of a complex algebraic group $G$ is given
by a complex algebraic $G$-module $V$ such that $G$ has a Zariski-open
orbit on $V$ and $\dim G=\dim V$. 
A current line of research investigates which \'etale representations can occur for reductive algebraic groups. Since a complete classification seems out of reach,  it is of interest to find new examples of \'etale representations for such groups. The aim of this note is to describe two classical constructions of Vinberg and of Bala \& Carter for nilpotent orbit classifications in semisimple Lie algebras, and to determine which reductive groups and \'etale representations arise in these constructions. We also explain in detail the relation between these two~constructions.
\end{abstract}

\baselineskip=0.48cm

\maketitle

\vspace*{-1cm}

\section{Introduction}\label{sec_intro}

\noindent Let $G$ be a complex linear algebraic group. A \emph{prehomogeneous module} $(G,\rho,V)$ is a complex algebraic representation $\rho\colon G\to \GL(V)$ such that $V$ is finite-dimensional and $G$ has a Zariski-open orbit in $V$. The points of the open orbit are said to be \emph{in general position} in $V$. In this case,  $V$ is a \emph{prehomogeneous vector space} and  $\dim G\geq\dim V$. If, in addition, $\dim G=\dim V$, then  $(G,\rho,V)$ is an
\emph{\'etale module}, and, accor\-dingly, $\rho$ is an \emph{\'etale representation} of $G$.
Clearly, for \'etale modules, the stabiliser in $G$ is a finite subgroup for any
point in the open orbit.
In terms of Lie algebras $\frg$, an \'etale representation is one where the
action of $\frg$ on a point in general position yields a vector space
isomorphism of $\frg$ and $V$, in particular $\dim\frg=\dim V$, and
the stabiliser subalgebra at a generic point is trivial.
The existence of \'etale representations implies the existence of left-symmetric pro\-ducts on Lie algebras
and thereby also that of left-invariant flat and torsion-free affine connections on
the corresponding Lie groups, see Burde \cite{burde06} for details and additional
references.
Due to this relationship, Lie groups or Lie algebras admitting \'etale representations
are also called \emph{(locally) affinely flat}. 

We are interested in  studying \'etale representations for complex reductive
algebraic groups. It is well-known that many reductive groups do not admit \'etale representations, for example, this is true for semisimple groups. Burde \cite{burde96} shows that if a reductive $G$ with simple commutator subgroup $S$ has an \'etale representation, then  $S=\SL(n,\CC)$ with $n\geq 2$. If $G$ is reductive with 1-dimensional centre and $S$ is not simple but has only pairwise isomorphic simple factors, 
then there are no \'etale modules for $G$, see Burde \& Globke \cite{BG}. A complete classification of \'etale modules for reductive algebraic groups
seems far away, and so the current aim is to find further examples.
Some can be directly obtained by inspecting
Sato \& Kimura's \cite{SK} classifications of prehomogeneous modules for
reductive algebraic groups, cf.~\cite{BG}.
Additional examples with interesting properties were constructed
by Burde et al.~\cite{BGM}.

\subsection*{Results} In the present note, we take a look at the \'etale representations for
reductive algebraic groups arising in the classification of nilpotent orbits in
semisimple Lie algebras, in particular, the classifications of Vinberg \cite{vinberg82} and Bala \& Carter \cite{BC1,BC2}. We show in Proposition \ref{ERfromCA} that 
 these groups  are subject to certain restrictions,
notably that all their simple factors are either special linear or orthogonal groups.
In light of known examples for groups with symplectic groups as simple
factors, e.g.~\cite{BGM}, this shows that \'etale modules of
this type are  a proper subclass of the \'etale modules for general
reductive algebraic~groups. 

 Vinberg's  and Bala \& Carter's  classification methods are very similar, and the second aim of this note is to provide concise descriptions of these methods and to explain how they are related.
In Section \ref{secCA}, we first take a look at Vinberg's construction of
carrier algebras for nilpotent elements in a graded Lie algebra. 
From the classification of simple carrier algebras we
determine the types of reductive groups for which \'etale modules arise by
this method.
In Section \ref{secBC}, we show how Bala \& Carter find minimal Levi
subalgebras for a given nilpotent element in a semisimple Lie algebra, and
explain how it relates to Vinberg's carrier
algebras, see Proposition~\ref{prop:vinberg2BC} and its corollary. In fact, for $\ZZ$-graded algebras, the two approaches coincide.
Gyoja \cite{gyoja96} described how to construct, given a prehomogeneous module $(G,\rho,V)$ for a reductive algebraic group, an
\'etale module $(G',\rho',V')$ for a reductive subgroup $G'\leq G$ and
a quotient module $V'$ of $V$.
In Proposition~\ref{propGyoja}  we show how this generalises Vinberg's and Bala \& Carter's constructions.


\subsection*{Notation}
All Lie algebras $\frg$ we consider here are 
defined over the field of complex numbers.
The centraliser of a subset $X\subseteq \g$ in $\g$ 
is $\zen_\g(X)=\{y\in \g: [y,X]=\{0\}\}$, and the normaliser is
$\frn_\g(X)=\{y\in\g : [y,X]\subseteq \Span_\CC(X)\}$.
An element $x\in\frg$ is  \emph{nilpotent} (or \emph{semisimple}) if its
adjoint representation $\ad(x)$ on $\frg$ is nilpotent (or semisimple). An algebraic group $G$ is \emph{reductive} if its maximal unipotent normal
subgroup is trivial. A Lie algebra $\frg$ is \emph{reductive} if it is
the Lie algebra of a reductive algebraic group. In this case $\frg=\frz\oplus\frs$,
where $\frs$ is the semisimple commutator subalgebra of $\g$ and $\frz=\frz(\g)$ is the centre
of $\frg$. Let $n>0$ be an integer and $\ZZ_n=\{0,\ldots,n-1\}$, or $n=\infty$ and $\ZZ_\infty=\ZZ$. A Lie algebra $\g$ is \emph{$\ZZ_n$-graded} if $\g=\bigoplus\nolimits_{i\in\ZZ_n}\g_i$, where each  $\g_i\leq \g$ is a subspace and  $[\g_i,\g_j]\subseteq \g_{i+j}$ for all $i,j$; here  $\g_k=\g_{k\bmod n}$ for all $k\in\ZZ$.
Note that $\g_0$ is a subalgebra of $\g$.

\section{Vinberg's carrier algebras} \label{secCA}
\noindent Vinberg \cite{vinberg82} studied complex semisimple Lie algebras
graded by an arbitrary abelian group.
However, the first step in his analysis is
to restrict to a subalgebra graded by a cyclic group, so we will only
consider this case.
Let $\g$ be a $\ZZ_n$-graded semisimple Lie algebra, where $n>0$ is an integer or $n=\infty$.
If $n$ is finite, then such a grading is the eigenspace decomposition of a
Lie algebra automorphism of order $n$. If $n=\infty$, then the $\Z$-grading of $\g$ comes from a derivation $\varphi$ that acts as multiplication by $i$ on each $\g_i$.
For semisimple $\g$, this derivation is inner, that is,
$\varphi=\ad(h)$ for a unique \emph{defining element} $h\in\g_0$.

Carrier algebras for $\g$ are constructed as follows. For a nonzero nilpotent $e\in \g_1$ choose an $\sl_2$-triple $(h,e,f)$ where $h\in\g_0$ and $f\in\g_{-1}$; this means $[h,e]=2e$, $[h,f]=-2f$, and $[e,f]=h$. Let $\frt_0$ be a maximal toral subalgebra of the centraliser of $(h,e,f)$ in $\g_0$ and define $\t=\CC h\oplus \frt_0$.
Equivalently, $\t$ is a maximal toral subalgebra of the normaliser of
$\CC e$ in $\g_0$, cf.~\cite[Lemma 30]{dfg15}.
Now let $\lambda\colon \t\to\mathbb{C}$ such that $[t,e]=\lambda(t)e$ for all $t\in\t$, and define the $\Z$-graded algebra $\g(\t,e)$ by
\begin{equation}\label{eqgte}
\g(\t,e)=\bigoplus\nolimits_{k\in\Z} \g(\t,e)_k\quad\text{with}\quad \g(\t,e)_k=\{x\in \g_{k} : [t,x]=k\lambda(t)x\text{ for all }t\in\t\};
\end{equation}
the derived subalgebra of $\g(\t,e)$ is the \emph{carrier algebra} of $e$, denoted
\begin{equation}
\frc(e)=[\g(\t,e),\g(\t,e)].
\label{eq:carrier}
\end{equation}
It is $\Z$-graded with the induced grading; note  that $e\in \frc(e)_1$. This carrier algebra of $e$ is unique up to conjugacy under the adjoint group $G_0$ of $\g_0$; one therefore also speaks of \emph{the} carrier algebra of $e$ in~$\g$. Moreover, two nonzero nilpotent elements of $\g_1$ are $G_0$-conjugate if and only if their carrier algebras are $G_0$-conjugate, which makes carrier algebras a useful tool for classifying nilpotent orbits.
We will not go into the details of this classification, as they are not
required for our purposes here, but an outline is found in Vinberg \cite[Section 4]{vinberg82}.
For details on the classification of nilpotent orbits in \emph{real} semisimple Lie algebras using carrier algebras defined over the real field we refer to Dietrich et al.~\cite{dfg15}.

Vinberg \cite[Theorem 4]{vinberg82} showed that every carrier algebra is semisimple $\Z$-graded with $\frc(e)_k\leq \g_k$ for each $k\in \Z$, and that carrier
algebras are characterized by the following three conditions:\footnote{Vinberg called $\frc(e)$ \emph{locally flat} and \emph{complete} if it satisfies (V1) and (V3), respectively. We avoid this terminology,
  as the semisimple Lie algebra $\frc(e)$ does not admit an
  affinely flat structure; only its reductive subalgebra $\frc(e)_0$ is locally affinely flat.} 

\smallskip

\begin{items}
\item[(V1)]
$\dim\frc(e)_0=\dim\frc(e)_1$;
\item[(V2)]
$\frc(e)$ is normalised by a maximal toral subalgebra of $\frg_0$;
\item[(V3)]
$\frc(e)$ is not a proper subalgebra of a reductive $\ZZ$-graded subalgebra of $\frg$
of the same rank.
\end{items}

\medskip

\noindent Moreover, \cite[Theorem 2]{vinberg82} shows that  $e$ is in \emph{generic} (or \emph{general}) \emph{position} in  $\frc(e)_1$, that is, $[\frc(e)_0,e]=\frc(e)_1$, so (V1) states that the adjoint action of $\frc(e)_0$ on $\frc(e)_1$
yields an \'etale representation for $\frc(e)_0$.

Only property (V1) is intrinsic to $\frc(e)$, whereas (V2) and
(V3) are determined by its embedding in the ambient Lie algebra~$\frg$.
Thus, to describe the Lie algebras that can appear
as carrier algebras for nilpotent elements in semisimple Lie algebras,
one must merely classify $\ZZ$-graded Lie algebras with (V1); we call such an algebra an 
\emph{abstract carrier algebra}.
Every abstract carrier algebra is a direct sum of simple abstract carrier algebras, 
so to describe the possible \'etale modules $(\frc(e)_0,\ad,\frc(e)_1)$
coming from semisimple carrier algebras, it is sufficient to focus on
simple abstract carrier algebras in Lie algebras.
In the next section we follow Djokovi\v{c}'s description \cite{djokovic} (based on work of Vinberg \cite{vinberg82}) of the classification of all simple abstract carrier algebras. Using a different terminology, Bala \& Carter \cite{BC1} have also obtained a classification for the classical case. With these classifications, we determine the following.

\begin{proposition}\label{ERfromCA}
A reductive Lie algebra $\frg_0$ admitting an \'etale representation coming from the adjoint action of a nilpotent element is a direct sum of the degree $0$-components of $j\geq1$ simple abstract carrier algebras.
As such, the semisimple part of $\frg_0$, if non-trivial, has simple factors of type
${\rm A}$ and at most $j$ factors of type ${\rm B}$ or ${\rm D}$.
The centre of $\frg_0$ has dimension $>j$, unless all the simple abstract
carrier algebras involved have weighted Dynkin diagrams of types in
$\{{\rm A}_1,{\rm E}_8^{(11)},{\rm F}_4^{(4)},{\rm G}_2^{(2)}\}$ as defined in \cite[Table II]{djokovic}, in 
which case the centre of $\frg_0$ has dimension $j$.
\end{proposition}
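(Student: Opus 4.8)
The plan is to reduce the whole statement to a single simple abstract carrier algebra and then to extract the required data from the classification in \cite[Table II]{djokovic}. By hypothesis the \'etale representation is $(\frc(e)_0,\ad,\frc(e)_1)$ for a carrier algebra $\frc(e)$, so that $\frg_0=\frc(e)_0$. Since every abstract carrier algebra is a direct sum of simple ones, I would write $\frc(e)=\frs_1\oplus\cdots\oplus\frs_j$ with each $\frs_i$ a simple abstract carrier algebra and $j\geq1$. The $\ZZ$-grading is inherited summandwise, hence $\frg_0=\bigoplus_{i=1}^{j}(\frs_i)_0$, which is exactly the first assertion. It then suffices to analyse one simple abstract carrier algebra $\frs=\frs_i$ and to add up the contributions.

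Next I would pin down the structure of $\frs_0$ intrinsically. The $\ZZ$-grading of the simple algebra $\frs$ is given by $\ad(h)$ for its defining element $h\in\frs_0$, so $\frs_0=\zen_\frs(h)$ is the centraliser of a semisimple element, that is, a reductive Levi subalgebra. Choosing a Cartan subalgebra $\frt\ni h$ and writing $\Phi$ for the root system of $\frs$, the numbers $\alpha(h)$ on the simple roots are precisely the labels of the weighted Dynkin diagram of $\frs$. The semisimple part $\frk$ of $\frs_0$ has root system $\{\alpha\in\Phi:\alpha(h)=0\}$, so its Dynkin diagram is the full subdiagram on the nodes of weight $0$; and the centre of $\frs_0$ has dimension $\rk\frs-\rk\frk$, the number of nodes of nonzero weight. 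These two bookkeeping facts drive the remaining assertions.

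For the type statement I would run through the simple abstract carrier algebras listed in \cite[Table II]{djokovic} (for the classical cases equivalently via \cite{BC1}) and read off the weight-$0$ subdiagram in each case. Deleting the nonzero nodes from a diagram of type $\mathrm{A}$ leaves only type-$\mathrm{A}$ components; from a diagram with a single multiple bond (type $\mathrm{B}$) or a single fork (type $\mathrm{D}$) at most one surviving component carries that feature, so $\frk$ is a sum of type-$\mathrm{A}$ factors together with at most one factor of type $\mathrm{B}$ or $\mathrm{D}$. The substantive point to be verified against the table is that under a grading satisfying (V1) no type-$\mathrm{C}$ factor and no second $\mathrm{B}/\mathrm{D}$ factor ever survives, and that the exceptional carrier algebras likewise contribute only factors of these shapes. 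Summing over the $j$ summands then yields at most $j$ factors of type $\mathrm{B}$ or $\mathrm{D}$.

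Finally, for the centre I would use that a nontrivial grading forces at least one nonzero node, so each $(\frs_i)_0$ has centre of dimension $\geq1$ and hence $\dim\frz(\frg_0)\geq j$. Equality holds exactly when every $\frs_i$ has a single node of nonzero weight, and inspecting \cite[Table II]{djokovic} shows these are precisely the diagrams of types $\mathrm{A}_1$, $\mathrm{E}_8^{(11)}$, $\mathrm{F}_4^{(4)}$, $\mathrm{G}_2^{(2)}$. The main obstacle is exactly this case-by-case reading of the classification: the conceptual skeleton above is clean, but excluding type-$\mathrm{C}$ factors, excluding a second $\mathrm{B}/\mathrm{D}$ factor, and isolating the four single-node diagrams admit no shortcut independent of the table, most delicately for the exceptional types.
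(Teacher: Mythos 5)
Your proposal is correct and follows essentially the same route as the paper: reduce to a direct sum of simple abstract carrier algebras, read off the semisimple part of $\frg_0$ as the weight-$0$ subdiagram of the weighted Dynkin diagram (with all labels in $\{0,1\}$), count the labels $1$ for the centre, and settle the substantive points case by case against the Djokovi\v{c}/Bala--Carter classification. In particular, you correctly isolate the two facts the paper verifies type by type --- that no type ${\rm C}$ factor survives (in the paper, because $\lambda_n\neq 0$ forces the label $2\lambda_n=1$) and that each ${\rm B}/{\rm D}$ summand contributes at most one ${\rm B}/{\rm D}$ factor --- as well as the table inspection identifying ${\rm A}_1$, ${\rm E}_8^{(11)}$, ${\rm F}_4^{(4)}$, ${\rm G}_2^{(2)}$ as the diagrams with a single label $1$.
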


From Burde et al.~\cite{BGM} we know that there exist \'etale representations for
reductive algebraic groups with a simple factor of type ${\rm C}$; this shows the following:

\begin{corollary}
There are \'etale representations for reductive Lie algebras that do not come from
the adjoint action of a nilpotent element.
\end{corollary}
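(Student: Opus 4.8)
The plan is to argue by contrast, using the classification constraint already encoded in Proposition~\ref{ERfromCA}. That proposition tells us exactly which reductive Lie algebras $\frg_0$ can carry an \'etale representation arising from the adjoint action of a nilpotent element: the semisimple part of such a $\frg_0$ must be a direct sum of simple factors of type ${\rm A}$ together with at most $j$ factors of type ${\rm B}$ or ${\rm D}$. The salient point for this corollary is the \emph{absence} of type ${\rm C}$: no simple symplectic factor can occur in the semisimple part of a $\frg_0$ whose \'etale representation comes from a nilpotent orbit.

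Next I would exhibit a reductive Lie algebra that does admit an \'etale representation but whose semisimple part contains a factor of type ${\rm C}$. For this it suffices to invoke the examples of Burde et al.~\cite{BGM}, who construct \'etale representations for reductive algebraic groups having a simple factor of type ${\rm C}$. Passing from such a group to its Lie algebra yields the required \'etale representation for a reductive Lie algebra with a symplectic simple factor, since the \'etale property is manifestly a Lie-algebra condition (the adjoint action at a generic point is a vector space isomorphism $\frg\to V$).

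The conclusion is then immediate: any such example falls outside the scope of Proposition~\ref{ERfromCA}. Indeed, if its \'etale representation were to come from the adjoint action of a nilpotent element, the proposition would prohibit the type ${\rm C}$ factor, a contradiction. Hence this \'etale representation cannot arise from a nilpotent orbit, which proves the corollary. I do not expect a genuine obstacle here: the entire mathematical content is carried by Proposition~\ref{ERfromCA} on one side and by the existence of the symplectic example in \cite{BGM} on the other. The only points to verify are that the construction in \cite{BGM} really does produce a simple factor of type ${\rm C}$ (which that reference supplies explicitly) and that the passage from the group to its Lie algebra preserves the \'etale condition (which is routine).
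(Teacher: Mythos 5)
Your proposal is correct and follows exactly the paper's own reasoning: Proposition~\ref{ERfromCA} excludes type ${\rm C}$ factors from \'etale representations arising via nilpotent orbits, while the examples of Burde et al.~\cite{BGM} provide \'etale representations with a simple factor of type ${\rm C}$. This is precisely the argument the paper gives in the sentence preceding the corollary, so no further comment is needed.
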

  
\subsection{Simple abstract carrier algebras Lie algebras}
Recall that the grading of a semisimple $\Z$-graded Lie algebra $\g$ with defining element $h\in \g$ is determined as $\g_k=\{x\in\g : [h,x]=kx\}$ for $k\in\Z$. Two $\Z$-graded Lie algebras $\g$ and $\g'$ with defining elements $h$ and $h'$ are isomorphic if there is a Lie algebra isomorphism $\varphi\colon \g\to\g'$ with $\varphi(h)=h'$. Djokovi\v{c} \cite{djokovic} has classified, up to isomorphism, semisimple $\Z$-graded Lie algebras in terms of weighted Dynkin diagrams: Let $\h\leq \g$ be a maximal toral subalgebra containing $h$, with corresponding root system $\Phi$. Let $\Pi$ be a basis of simple roots such that $\alpha(h)\geq 0$ for every $\alpha\in \Pi$.  Let $\Delta(\g)$ be the Dynkin diagram of $\g$ with respect to $\h$, with vertices labeled by $\Pi$, and to each vertex $\alpha\in \Pi$ attach the integer $\alpha(h)$. The resulting weighted Dynkin diagram is denoted $\Delta(\g,h)$. It is  proved  in \cite[Theorem~1]{djokovic} that there is a bijection between (isomorphism classes of) $\Z$-graded semisimple Lie algebras $(\g,h)$ and (isomorphism classes of) weighted Dynkin diagrams $\Delta(\g,h)$.

In the following, let $(\g,h)$ be simple $\ZZ$-graded, and define $\deg\alpha\in\ZZ$ for $\alpha\in \Phi$ by $x_\alpha\in \g_{\deg\alpha}$, where $x_\alpha\in\g$ is a root vector corresponding to  $\alpha$. If $\deg\alpha=k$, then $\alpha(h)x_\alpha=[h,x_\alpha]=kx_\alpha$, hence $\alpha(h)=k$; this shows that $\deg\alpha=\alpha(h)$. If $r_k$ is the number of roots with degree $k$, then $\g$ is an abstract carrier algebra if and only if $\dim\h + r_0= r_1$.
It is shown in \cite[p.\ 374]{djokovic} that if $\g$ is an abstract carrier algebra, then  $\deg\alpha\in\{0,1\}$ for every simple root $\alpha\in\Pi$.
So for the classification it remains to determine the weighted Dynkin diagrams with
weights $\{0,1\}$ such that $\dim\frh_0+r_0=r_1$.
The reductive subalgebra $\frg_0$ is then given by the subdiagram consisting
of the vertices with weight $0$.  To illustrate the method, we include the full proof for type A. To keep the exposition short, for the other types we only describe the results and refer to  \cite[Section 4]{djokovic}, \cite[Section 3]{BC1}, and \cite[p.\ 30]{vinberg82} for more details.

If $\g_0=\h$ is a maximal toral subalgebra, then $r_0=0$ and all labels in the weighted diagram are $1$; one says that $\g$ is \emph{principal}. In this  case the $0$-component of the carrier
algebra is abelian. 

 \bigskip

{\bf Type A.} Let $\g=\sl(n+1,\CC)$. Let the diagonal matrix $h=\diag(\lambda_1,\ldots,\lambda_{n+1})$ be the defining element with  $\lambda_1\geq \ldots\geq \lambda_{n+1}$. Consider the root system  $\Phi=\{\pm (\varepsilon_i-\varepsilon_j) : 1\leq i<j<n\}$ and basis $\Pi=\{\alpha_1,\ldots,\alpha_n\}$ where each $\varepsilon_i$ maps $h$ to $\lambda_i$, and $\alpha_i=\varepsilon_i-\varepsilon_{i+1}$.
Let $k=\lambda_1-\lambda_{n+1}$ and for $i=0,\ldots,k$ let $d_i$ be the number of $\lambda_r$ with $\lambda_r=\lambda_1-i$. A root $\pm(\varepsilon_i-\varepsilon_j)$ has degree $0$ if and only if $\lambda_i=\lambda_j=\lambda_1-r$ for some $r$, and for each~$r$ there are $d_r(d_r-1)$ possibilities for $\varepsilon_i$ and $\varepsilon_j$. This implies that $r_0=\sum_{j=0}^k d_j(d_j-1)$. In a similar way, $r_1=\sum_{j=0}^{k-1} d_jd_{j+1}$, and now a direct calculation shows that  $n+r_0=r_1$ if and only if 
 $$(d_0-d_1)^2+(d_1-d_2)^2+\cdots+(d_{k-1}-d_k)^2+(d_0^2-1)+(d_k^2-1)=0;$$
to see the latter, note that $d_0+\ldots+d_k=n+1$.
In conclusion,  $\g$ is an abstract carrier algebra if and only if
$d_0=\ldots=d_k=1$ and $k=n$, which is equivalent to $\g$ being principal.

 \bigskip

{\bf Type B and D.} Let  $\g=\so(m,\CC)$ be realised as $\g=\{X\in \gl(m,\CC):X^\intercal J=-JX\}$ where $J$ is the matrix that only has $1$s on its anti-diagonal and $0$s elsewhere, and either $m=2n+1$ (with $n\geq 2$) or $m=2n$ (with $n\geq 4$). Write the defining element as $h=\diag(\lambda_1,\ldots,\lambda_m)$ with $\lambda_1\geq \ldots\geq \lambda_m$. Since $hJ=-Jh$,  each $-\lambda_i = \lambda_{m+1-i}$. It has been shown that there are $s\geq 1$ and integers $k_1>\ldots>k_s\geq 0$ such that, as multisets, \begin{eqnarray}\label{eqBD}\{\lambda_1,\ldots,\lambda_m\}=\{ k_i,k_i-1,\ldots,1-k_i,-k_i : 1\leq i\leq s\},
\end{eqnarray}
and  $m=(2k_1+1)+\ldots+(2k_s+1)$; note that $0$ occurs $s$ times in $\{\lambda_1,\ldots,\lambda_m\}$, and $1$ occurs at least $s-1$ times, etc. Conversely, for any such integers $k_1>\ldots>k_s\geq 0$ with  $m=(2k_1+1)+\ldots+(2k_s+1)$ there is a defining element $h$ whose eigenvalues satisfy \eqref{eqBD}. To determine the labeled Dynkin diagrams, one chooses the diagonal matrices in $\g$ as maximal toral subalgebra,  and then the following holds:

\smallskip

If $m=2n+1$, then $s$ is odd and $\lambda_{n+1}=0$. The corresponding simple abstract carrier algebra $B(k_1,\ldots,k_s)$ of type ${\rm B}_n$ has a weighted Dynkin diagram with labels $\lambda_1-\lambda_2,\ldots,\lambda_{n-1}-\lambda_n, \lambda_n$, where $\lambda_n$ is the label of the shorter root, see \cite[Figure 5]{djokovic}. In that figure the last label is given as $2\lambda_n$, which is  a typo; cf.\ \cite[pp.\ 410--412]{BC1}. If $s=1$, then $\{\lambda_1,\ldots,\lambda_m\}=\{n,n-1,\ldots, 1-n,-n\}$ and $B(n)$ is principal. If $s=3$, then $\lambda_{n+1},\lambda_n=0$ and $0<\lambda_{n-1}$, implying that semisimple part of $\g_0$ is a direct sum of algebras of type A. If $s\geq 5$, then $\lambda_{n+1},\lambda_n,\lambda_{n-1}=0$ and that  semisimple part  is a direct sum of algebras of type A and one algebra of type~B.

\smallskip

If $m=2n$, then $s$ is even and $\lambda_n=0$. The corresponding abstract carrier algebra $D(k_1,\ldots,k_s)$ of type ${\rm D}_n$ has a weighted Dynkin diagram with labels $\lambda_1-\lambda_2,\ldots,\lambda_{n-2}-\lambda_{n-1}, \lambda_{n-1},\lambda_{n-1}$, where $\lambda_{n-2}-\lambda_{n-1}$ is the label of the vertex of degree $3$ connected to the two vertices of degree 1 with label $\lambda_{n-1}$, see \cite[Figure 6]{djokovic}.  If $s\geq 6$, then $\lambda_n,\lambda_{n-1},\lambda_{n-2}=0$ and the  semisimple part of $\g_0$ is a direct sum of algebras of type A and one algebra of type~D. If $s=2$ and $k_2>0$, or $s=4$, then that  semisimple part is a direct sum of algebras of type~A; if $s=2$ and $k_2=0$, then $D(n-1,0)$ is principal.

 \bigskip

{\bf Type C.} Let $\g=\sp(2n,\CC)$ be realised as $\g=\{X\in \gl(2n+1,\CC):X^\intercal S=-SX\}$ where $S$ has the identity matrix $I_n$ and the negative $-I_n$ on its anti-diagonal. The simple abstract carrier algebras have the form $C(k_1,\dots,k_s)$ and the construction is similar to those for type B and D: here we can assume the  defining element is $h=\diag(\lambda_1,\ldots,\lambda_n,-\lambda_n,\ldots,-\lambda_1)$ with $\lambda_1\geq \ldots\geq \lambda_n>0$ and, as multisets, $\{\pm \lambda_1,\ldots,\pm \lambda_n\}=\{k_i-\tfrac{1}{2},k_i-\tfrac{3}{2},\ldots,\tfrac{3}{2}-k_i,\tfrac{1}{2}-k_i : 1\leq i\leq s\}$ for some $k_1>k_2>\cdots> k_s>0$ with $n=k_1+\ldots+k_n$.  If one chooses the diagonal matrices in $\g$ as maximal toral subalgebra, then the Dynkin diagram of $C(k_1,\ldots,k_s)$ has  labels $\lambda_1-\lambda_2,\ldots,\lambda_{n-1}-\lambda_n,2\lambda_n$, where $2\lambda_n$ is attached to the longer root, see \cite[Figure 7]{djokovic}. Since $\lambda_n\ne 0$, we have $2\lambda_n=1$, and so the semisimple part of $\g_0$ is a direct sum of algebras of type A. If $s=1$, then $C(n)$ is principal;

\bigskip

{\bf Exceptional types.} A direct calculation yields the abstract carrier algebras $\g$ of exceptional types ${\rm G}_2$, ${\rm F}_4$, ${\rm E}_6$, ${\rm E}_7$, ${\rm E}_8$;  the semisimple part of $\g_0$ is always a sum of Lie algebras of type A, see  \cite[Table~1]{vinberg82}.

 \medskip

{\bf The centre of $\boldsymbol{\frg_0}$.}
Let $\frg_0$ be as before and write $\frg_0=\zen\oplus \frs$ where $\zen$ is the centre and $\frs$ is semisimple. It follows from \cite[p.~19]{vinberg82} that $\dim\zen=\rk\frg_0-\rk\frs=\rk\frg-\rk\frs$. Since $\rk \frs$ equals the number of labels~$0$ in the weighted
Dynkin diagram of $\frg$, the dimension of  $\zen$ equals  the number of labels $1$. For example, if $\g=B(5,2,1)$ with rank $9$, then
$\lambda_1,\ldots,\lambda_9= 5,4,3,2,2,1,1,1,0$, yielding labels $1,1,1,0,1,0,0,1,0$; thus, $\dim\zen=5$. From the above classification, it follows that $\dim\zen=1$ if and only if $\g$ has type ${\rm A}_1$  or if $\g$ is the $\Z$-graded algebra ${\rm E}_8^{(11)}$, ${\rm F}_4^{(4)}$, or ${\rm G}_2^{(2)}$ as defined in  \cite[Table II]{djokovic}.


\section{Bala and Carter's construction}\label{secBC}
 \noindent Bala \& Carter \cite{BC1,BC2} classified the nilpotent orbits in a
complex simple Lie algebra
using a construction very similar to Vinberg's, without the assumption that the
Lie algebras are graded.
Just like Vinberg's construction, this yields an \'etale representation for a
certain reductive subalgebra of $\frg$.
In this section, we review some of these results and show how the approaches by
Vinberg and by Bala \& Carter are~related.

First, we recall a few definitions. Let $\frg=\zen\oplus\frs$ be a reductive Lie algebra with $\frs$ semisimple and $\zen=\zen(\frg)$ the  centre. A \emph{Borel subalgebra} of $\frg$ is a maximal solvable subalgebra of $\frg$, and a subalgebra of $\frg$ is a  \emph{parabolic subalgebra} if it contains a Borel subalgebra of $\frg$. 
Every parabolic subalgebra $\frp$ of $\frg$ is a semidirect product $\frp = \frm\ltimes\frn$
of a nilpotent ideal $\frn$ of $\frp$, all of whose elements are nilpotent, and a
reductive subalgebra $\frm$.
A parabolic subalgebra is  \emph{distinguished} if
$\dim\frn/[\frn,\frn]=\dim\frm$.
Any reductive subalgebra $\frm$ of $\frg$ arising in the above way for some
parabolic subalgebra of $\frg$ is a \emph{Levi subalgebra} in $\frg$.
Its commutator $\frc=[\frm,\frm]$ is 
\emph{semisimple of parabolic type}. Bala \& Carter defined the terms above for semisimple $\frg$, but they carry over
without change to reductive~$\frg$.

For semisimple $\frg$, it is shown in
\cite[Theorem 6.1]{BC2} that the classification of
nilpotent orbits is equivalent to the classification of conjugacy classes of pairs
$(\frc,\frq_{\frc})$, where $\frc$ is semisimple subalgebra of parabolic type in
$\frg$, and $\frq_{\frc}$ is a distinguished parabolic subalgebra of $\frc$: For a nonzero nilpotent $e\in\frg$ with  $\sl_2$-triple $(h,e,f)$ define  $\frg_k=\{x\in\frg : [h,x]=kx\}$ for $k\in \Z$; this furnishes $\g$ with a $\Z$-grading.  With this grading,  $e\in\frg_2$. The element $e$ is \emph{distinguished in $\frg$} if $\ad(e)\colon\frg_0\to\frg_2$ is an isomorphism, that is, if  $e$ is in generic position. If $e$ is not distinguished in $\frg$, then \cite[Propositions 5.3 \& 5.4]{BC2} tell us how to  construct a semisimple subalgebra
$\frc$ of $\frg$  in which $e$ is distinguished: if $\frh_0$ is a maximal toral  subalgebra  of the centraliser of $(h,e,f)$ in $\frg$, then
\begin{equation}\label{eqBC}
 \frm =\zen_{\frg}(\frh_0)\quad\text{and}\quad   \frc=[\frm,\frm]
\end{equation}
are a minimal Levi subalgebra of $\frg$ containing $e$ and a semisimple subalgebra of parabolic type, respectively, such that $e$ is distinguished in $\frc$. The pair corresponding to  $e$ can be chosen to be $(\frc,\frq_\frc)$, where $\frq_\frc$ is the Jacobson-Morovoz parabolic, see \cite[Proposition 4.3]{BC1} and \cite[Theorem~6.1]{BC2}. If $G$ is a semisimple algebraic group with Lie algebra $\frg$, then $\frc$ is determined uniquely up to the action by $\Zen_G(e)$, the centraliser of $e$ in $\Ad_{\frg}(G)$. Since $e$ is distinguished in $\frc$, the $\Z$-grading of its $\sl_2$-triple in $\frc$ yields an  \'etale representation for the adjoint action of the reductive subalgebra $\frc_0$ on the subspace $\frc_2$ by evaluation at $e$. The adjoint
action of $\frg$ integrates to that of $G$, and thus we obtain an \'etale representation of the reductive group with Lie algebra $\frc_0$ on the space $\frc_2$.

The  Borel and parabolic subalgebras of a reductive  $\frg=\frs\oplus\zen$ as above are precisely  $\zen\oplus\frb$ and 	$\zen\oplus\frp$, respectively, with $\frb\leq \frs$  a Borel subalgebra and  $\frp\leq \frs$ parabolic. It follows that 	the Levi subalgebras of $\frg$ are precisely $\zen\oplus\frm$,
	where $\frm$ is a Levi subalgebra of $\frs$; moreover,  $\frm$ is a minimal Levi subalgebra containing $e$ in $\frs$ if and
	only if $\zen\oplus\frm$ is a minimal Levi subalgebra containing $e$ in $\frg$.

\subsection{Relation to carrier algebras}\label{secRelation} We compare the Bala \& Carter construction with Vinberg's carrier algebras. Vinberg starts with a semisimple $\Z_n$-graded Lie algebra $\g=\bigoplus_{i\in\Z_n}\g_i$; recall that we allow $\Z_\infty=\Z$ here. Let $e\in\g_1$ be nonzero nilpotent with $\sl_2$-triple $(h,e,f)$ such that $h\in \g_0$ and $f\in \g_{-1}$, and define $\g(\t,e)$ as in \eqref{eqgte}; as mentioned before, $\frt$ is  a maximal toral subalgebra of the normaliser $\frn_{\frg_0}(e)$ and $\lambda\colon \frt\to\CC$ is defined by $[t,e]=\lambda(t)e$. Let $h_0=\frac{1}{2}h$ define the following $\Z$-graded algebra
\begin{equation}\g(h_0)=\bigoplus\nolimits_{k\in\Z} \g(h_0)_k\quad\text{ with }\quad \g(h_0)_k=\{x\in \g_k : [h_0,x]=kx\};\label{eq:gh0}\end{equation}
note that  $\frt\leq \g(h_0)_0$. It follows from \cite[Lemmas 1 \& 2]{vinberg82} that  $\frg(h_0)$ and $\frg(\frt,e)$ are both reductive. Recall that $\frt=\CC h\oplus \frt_0$, where  $\frt_0$ is a maximal toral subalgebra of $\zen_{\g_0}(h,e,f)$. More precisely:

\begin{lemma}\label{lem:centre_t0}
  We have  $\frt_0=\ker\lambda=\zen(\frg(\frt,e))$ and $\frg(\frt,e)=\zen_{\g(h_0)}(\frt_0)$.
\end{lemma}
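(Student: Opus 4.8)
The plan is to establish the three equalities in turn, extracting almost everything from the explicit form of $\lambda$ and using reductivity of $\g(\t,e)$ in $\g$ only at the very end. First I would pin down $\lambda$. Since $(h,e,f)$ is an $\sl_2$-triple, $[h,e]=2e$ gives $\lambda(h)=2$, while $\frt_0\subseteq\zen_{\g_0}(h,e,f)$ forces $[t,e]=0$, hence $\lambda(t)=0$, for every $t\in\frt_0$. As $\frt=\CC h\oplus\frt_0$ with $\lambda(h)\neq 0$, the kernel of $\lambda$ is exactly the hyperplane $\frt_0$, which is the first equality. The same computation shows that $\frt_0=\ker\lambda$ annihilates every graded piece $\g(\t,e)_k$, on which each $t\in\frt$ acts by the scalar $k\lambda(t)$; hence $\frt_0\subseteq\zen(\g(\t,e))$, giving one inclusion of the middle equality.

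For the identity $\g(\t,e)=\zen_{\g(h_0)}(\frt_0)$ I would argue degree by degree. Writing a general $t\in\frt$ as $t=ch+t_0$ with $t_0\in\frt_0$, the defining condition $[t,x]=k\lambda(t)x$ for $x\in\g_k$ splits into $[h,x]=2kx$ (the case $t=h$, using $\lambda(h)=2$) together with $[t_0,x]=0$ for all $t_0\in\frt_0$ (using $\lambda|_{\frt_0}=0$). The first condition says precisely that $x\in\g(h_0)_k$, since $h_0=\tfrac12 h$; the second says that $x\in\zen_\g(\frt_0)$. Because $\frt_0\subseteq\g(h_0)_0$, the subalgebra $\g(h_0)$ is $\ad(\frt_0)$-stable and $\zen_{\g(h_0)}(\frt_0)$ is graded, so summing over $k$ yields the claimed equality. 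This step is essentially formal.

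The remaining inclusion $\zen(\g(\t,e))\subseteq\frt_0$ is the main point. Note first that $h,e,f$ all lie in $\g(\t,e)$ (in degrees $0,1,-1$), so any central $z\in\zen(\g(\t,e))$ centralises the whole $\sl_2$-triple; moreover, centralising $h$ forces $z$ into degree $0$ (as $\ad(h)$ acts by $2k$ on $\g(\t,e)_k$), whence $z\in\g(\t,e)_0=\zen_{\g_0}(\frt)\subseteq\g_0$. Thus $\zen(\g(\t,e))\subseteq\zen_{\g_0}(h,e,f)$. The crux is that this centre is a \emph{toral} subalgebra of $\g$: since $\g(\t,e)$ is reductive in $\g$ by \cite[Lemmas 1 \& 2]{vinberg82}, its centre consists of elements that are semisimple in $\g$. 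Granting this, $\zen(\g(\t,e))$ is an abelian subalgebra of $\zen_{\g_0}(h,e,f)$ consisting of semisimple elements and containing $\frt_0$; by the maximality of $\frt_0$ as a toral subalgebra of $\zen_{\g_0}(h,e,f)$ it must coincide with $\frt_0$, completing the middle equality.

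The one place that needs care, and the step I expect to be the genuine obstacle, is justifying that $\zen(\g(\t,e))$ is toral in $\g$ rather than merely abelian. This is exactly where reductivity of $\g(\t,e)$ as a \emph{subalgebra of} $\g$ (not just as an abstract Lie algebra) enters: it rules out a nonzero nilpotent central element. Concretely, if one writes $z=z_s+z_n$ for the Jordan decomposition of a central $z$ in $\g$, then $z_s,z_n$ again centralise $\frt$ and $(h,e,f)$ and lie in $\g(\t,e)_0$; reductivity in $\g$ forces the central (hence $\g$-semisimple) element $z$ to have $z_n=0$, and the maximality argument then applies. I would take care to cite the relevant reductivity statement precisely, since everything downstream hinges on the centre being semisimple in the ambient $\g$.
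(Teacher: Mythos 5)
Your proof is correct and follows the paper's argument almost step for step: the identification $\ker\lambda=\frt_0$ via $\lambda(h)=2$ and $\lambda|_{\frt_0}=0$, the inclusion $\ker\lambda\subseteq\zen(\frg(\frt,e))$, the reduction of a central element to degree $0$ using $\ad(h)$, and the degree-by-degree splitting of the condition $[t,x]=k\lambda(t)x$ into $[h,x]=2kx$ and $[t_0,x]=0$ (giving $\frg(\frt,e)=\zen_{\frg(h_0)}(\frt_0)$) are exactly the paper's steps. The one place you diverge is the hard inclusion $\zen(\frg(\frt,e))\subseteq\frt_0$: the paper observes $\zen(\frg(\frt,e))\leq\frn_{\frg_0}(e)$ and invokes the characterisation of $\frt$ as a maximal toral subalgebra of the normaliser of $\CC e$ in $\frg_0$ (cited from \cite[Lemma 30]{dfg15}) to conclude $\zen(\frg(\frt,e))\leq\frt$ and hence $\leq\ker\lambda$; you instead note $f\in\frg(\frt,e)_{-1}$, deduce $\zen(\frg(\frt,e))\subseteq\zen_{\frg_0}(h,e,f)$, and appeal to the maximality of $\frt_0$ as a toral subalgebra there --- that is, you use the primary definition of $\frt_0$ rather than the equivalent normaliser characterisation of $\frt$, at the small extra cost of verifying that $f$ lies in $\frg(\frt,e)$. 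Both routes need the central elements to be semisimple in $\frg$, a hypothesis the paper leaves implicit in the step ``$\zen\leq\frn_{\frg_0}(e)$, and therefore $\zen\leq\frt$''; your explicit appeal to the reductivity of $\frg(\frt,e)$ in $\frg$ (\cite[Lemmas 1 \& 2]{vinberg82}, with the Schur-type argument that the centre of a subalgebra reductive in $\frg$ acts semisimply on $\frg$) surfaces exactly the point on which both proofs hinge, which is a gain in completeness; you are right that this is the step where the precise meaning of ``reductive'' (in $\frg$, not merely abstractly) must be cited.
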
 
\begin{proof}
  Write $\zen=\zen(\frg(\frt,e))$.  Recall that  $\frt=\CC h\oplus \frt_0$, so $\ker\lambda=\frt_0$ follows from $[h,e]=2e$. Clearly, if $t\in\ker\lambda$, then $[t,y]=0$ for each $y\in \g(\t,e)_k$, so $t\in\zen$. Since $\zen$ commutes with the defining element of  $\frg(\frt,e)$, we have  $\zen\leq\frg(\frt,e)_0$. Thus, $\zen\leq\frn_{\frg_0}(e)$, and therefore  $\zen\leq \frt$, This implies $\zen\leq \ker\lambda$, hence $\zen=\ker\lambda$. Suppose $x\in\g(h_0)$ centralises $\frt_0$ and write $x=\bigoplus_{k\in\Z} x_k$ with each $x_k\in \g(h_0)_k$. Since $t_0\leq \g(h_0)_0$, it follows from $0=[x,\frt_0]=\bigoplus_{k\in\Z} [x_k,\frt_0]$ that each $x_k$ centralises $\frt_0$. By assumption, $[h,x_k]=2kx_k$, so $x_k\in \g(\frt,e)_k$, and hence $\zen_{\g(h_0)}(\frt_0)\leq \g(\frt,e)$. Conversely, if $x\in\frg(\frt,e)_k$, then $[h,x]=2kx$ and, if $t\in \frt_0$, then $[t,x]=k\lambda(t)x=0$ since $\frt_0=\ker\lambda$. Thus, $x\in \zen_{\g(h_0)}(\frt_0)_k$.
\end{proof}

The next proposition shows that Vinberg's construction \eqref{eqgte}
of $\frg(\frt,e)$ and its carrier algebra is the same as applying
Bala \& Carter's approach \eqref{eqBC} to  the $\ZZ$-graded Lie algebra
$\frg(h_0)$.
Below, let $G$ be the semisimple algebraic group with Lie algebra
$\frg(h_0)$.
The conjugacy up to the centraliser $\Zen_G(e)$ reflects the freedom in choosing an
$\sl_2$-triple $(h,e,f)$ for a given nonzero nilpotent element $e$.

\begin{proposition}\label{prop:vinberg2BC}
Let $\frg$ be a $\ZZ_n$-graded complex semisimple Lie algebra, where
$n\in\NN\cup\{\infty\}$, and let $e$, $h_0$, and $\frt$ be as above.
Then $\frg(\frt,e)$ is a minimal Levi subalgebra of $\frg(h_0)$
containing $e$ and,  up to $\Zen_G(e)$-conjugacy, the carrier subalgebra $\frc(e)=[\frg(\frt,e),\frg(\frt,e)]$ is the unique semisimple
subalgebra of parabolic type in $\frg(h_0)$ in which $e$ is distinguished.
\end{proposition}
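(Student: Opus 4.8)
The plan is to apply the Bala \& Carter construction \eqref{eqBC} to the reductive $\ZZ$-graded Lie algebra $\frg(h_0)$, taking the $\sl_2$-triple $(h,e,f)$, and then to identify the resulting minimal Levi subalgebra and semisimple subalgebra of parabolic type with $\frg(\frt,e)$ and $\frc(e)$ by means of Lemma~\ref{lem:centre_t0}. Since $h_0=\tfrac12 h$ gives $[h_0,h]=0$, $[h_0,e]=e$, and $[h_0,f]=-f$, the triple $(h,e,f)$ lies in $\frg(h_0)$ with $h\in\frg(h_0)_0$, $e\in\frg(h_0)_1$, and $f\in\frg(h_0)_{-1}$; moreover $e$ is ad-nilpotent and, being of positive degree while $\zen(\frg(h_0))\subseteq\frg(h_0)_0$, it lies in the semisimple part of $\frg(h_0)$, so the reductive form of the construction applies.

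First I would match the grading conventions. With respect to $\ad(h)$ one has $\frg(h_0)_k=\{x\in\frg(h_0):[h,x]=2kx\}$, and since $\lambda(h)=2$ (from $[h,e]=2e$), the induced $\ZZ$-grading places $\frc(e)_k$ in the $\ad(h)$-eigenspace for the eigenvalue $2k$; in particular $e\in\frc(e)_1$ is an $\ad(h)$-weight-$2$ vector, matching the Bala \& Carter normalisation $e\in\frg_2$ and their distinguishedness criterion $\ad(e)\colon\frc(e)_0\to\frc(e)_1$.

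The key computation is the centraliser of the triple inside $\frg(h_0)$. Any $x\in\frg(h_0)$ with $[h,x]=0$ lies in $\frg(h_0)_0=\{x\in\frg_0:[h,x]=0\}=\zen_{\frg_0}(h)$; imposing in addition $[e,x]=[f,x]=0$ yields $\zen_{\frg(h_0)}(h,e,f)=\zen_{\frg_0}(h,e,f)$. Hence Vinberg's $\frt_0$, a maximal toral subalgebra of $\zen_{\frg_0}(h,e,f)$, is also a maximal toral subalgebra of the centraliser of $(h,e,f)$ in $\frg(h_0)$, and thus an admissible choice of $\frh_0$ in \eqref{eqBC}. Applying the construction with this choice gives the minimal Levi subalgebra $\frm=\zen_{\frg(h_0)}(\frt_0)$ of $\frg(h_0)$ containing $e$ and the semisimple subalgebra of parabolic type $\frc=[\frm,\frm]$, in which $e$ is distinguished. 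By Lemma~\ref{lem:centre_t0}, $\zen_{\frg(h_0)}(\frt_0)=\frg(\frt,e)$, so $\frm=\frg(\frt,e)$ and $\frc=[\frg(\frt,e),\frg(\frt,e)]=\frc(e)$; the uniqueness up to $\Zen_G(e)$-conjugacy is then exactly the uniqueness in \cite[Propositions~5.3~\&~5.4]{BC2} and \cite[Theorem~6.1]{BC2} transported to $\frg(h_0)$.

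The main obstacle is the bookkeeping of the last two steps: one must verify carefully that passing from $\frg$ to the subalgebra $\frg(h_0)$ leaves the centraliser of the triple unchanged---so that Vinberg's $\frt_0$ genuinely serves as Bala \& Carter's $\frh_0$---and that the factor $\tfrac12$ between $h_0$ and $h$ aligns the Vinberg grading of $\frc(e)$ with the doubled Jacobson--Morozov grading used by Bala \& Carter. Once these identifications are in place, the statement reduces to quoting \eqref{eqBC} together with Lemma~\ref{lem:centre_t0}.
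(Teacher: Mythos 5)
Your proof is correct and takes essentially the same route as the paper's: both hinge on showing that Vinberg's $\frt_0$ is a maximal toral subalgebra of the centraliser of $(h,e,f)$ in $\frg(h_0)$ (because elements of nonzero degree cannot centralise $h_0$), then applying \eqref{eqBC} and identifying $\frm=\zen_{\frg(h_0)}(\frt_0)=\frg(\frt,e)$ via Lemma~\ref{lem:centre_t0}, with uniqueness quoted from \cite[Proposition 5.3]{BC2}. The only difference is presentational: where you invoke a ``reductive form'' of \eqref{eqBC} directly, the paper carries out that reduction explicitly, writing $\frt_0=\zen(\frg(h_0))\oplus\frt_0'$, running the semisimple construction inside the semisimple part $\frs$ of $\frg(h_0)$, and reassembling via $\zen_{\frg(h_0)}(X)=\zen(\frg(h_0))\oplus\zen_{\frs}(X)$ --- precisely the bookkeeping you flagged as the main obstacle.
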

\begin{proof} Note that $h$ stabilizes each $\g_k$, and  $\frg(h_0)_k$ is the intersection of $\g_k$ with the $2k$-eigenspace of~$h$.  Lemma \ref{lem:centre_t0} shows that  $\frg(\frt,e)\leq \frg(h_0)$, and the $\Z$-gradings of both algebras are determined by the eigenvalues of $\ad(h_0)$. The semisimple part  $\frs$ of  $\frg(h_0)$ is a semisimple ideal in $\frg(h_0)$ containing $(h,e,f)$;  let $\fra$ be the subalgebra generated by $\{h,e,f\}$.  Note that for every subset $X\subseteq \frg(h_0)$ we have 
\[  
\zen_{\frg(h_0)}(X) = \zen(\frg(h_0))\oplus\zen_{\frs}(X).
\tag{$*$}\label{*}
\]
We claim that $\frt_0$ is a maximal toral subalgebra of $\zen_{\frg(h_0)}(\fra)$: recall that $\frt_0$ is defined as a maximal toral subalgebra of $\zen_{\g_0}(\fra)$, which is reductive by \cite[p.\ 21]{vinberg82}.
Since $\frt_0\leq \g(h_0)_0\leq \g_0$, we know that  $\frt_0$ is also a maximal toral subalgebra in $\zen_{\frg(h_0)_0}(\fra)$.  On the other hand, we have $\zen_{\frg(h_0)_0}(\fra)=\zen_{\frg(h_0)}(\fra)$ because elements of degree $\neq0$ in $\frg(h_0)$ do not commute with the defining element $h_0\in\fra$; thus, $\frt_0\leq \zen_{\frg(h_0)}(\fra)$ is a maximal toral subalgebra. We can write  $\frt_0 = \zen(\frg(h_0)) \oplus \frt_0'$, where $\frt_0'$ is a maximal toral subalgebra of $\zen_{\frs}(\fra)$, and so for every subset $X\subseteq\frg(h_0)$ we have
\[
\zen_{X}(\frt_0)=\zen_{X}(\frt_0'). 
\tag{$**$}\label{**}
\]
 The construction in \eqref{eqBC} shows that  $\frm'=\zen_{\frs}(\frt_0')$ is a minimal Levi subalgebra of $\frs$ containing~$e$, so  $\frm=\zen(\frg(h_0))\oplus\frm'$ is a minimal Levi subalgebra of $\frg(h_0)$ containing~$e$. Now \eqref{*}, \eqref{**}, and Lemma~\ref{lem:centre_t0}~show
\[
\frm = \zen(\frg(h_0))\oplus\zen_{\frs}(\frt_0')
= \zen_{\frg(h_0)}(\frt_0')
= \zen_{\frg(h_0)}(\frt_0)
= \frg(\frt,e),
\]
so $\frg(\frt,e)$ is a minimal Levi subalgebra in $\frg(h_0)$
containing $e$. The construction in \eqref{eqBC} shows that $e$ is distinguished in $[\frm,\frm]$, and the latter is semisimple of parabolic type.
Since $[\g(\frt,e),\g(\frt,e)]=[\frm,\frm]$
is the carrier algebra, the claim follows; \cite[Proposition~5.3]{BC2} shows uniqueness up to $\Zen_G(e)$-conjugacy.
\end{proof}

\enlargethispage{3ex}
For $\ZZ$-graded semisimple Lie algebras $\frg$, we have
$\frg=\frg(h_0)$ where  $h=2h_0$ is the defining element, see \cite[Remark 33]{dfg15},
so the two approaches by Vinberg and Bala \& Carter coincide.

\begin{corollary}\label{cor:vinberg2BC}
If $\frg$ is a $\ZZ$-graded complex semisimple Lie algebra, then up to $\Zen_G(e)$-conjugacy, the subalgebra $\frg(\frt,e)$ obtained by Vinberg's construction and the
subalgebra $\frm$ obtained by Bala \& Carter's construction coincide.
\end{corollary}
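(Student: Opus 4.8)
The plan is to read the corollary off Proposition~\ref{prop:vinberg2BC}, using the single extra feature that is special to the $\ZZ$-graded case. First I would record that when $\frg$ is $\ZZ$-graded one has $\frg=\frg(h_0)$, as noted immediately before the statement (see \cite[Remark~33]{dfg15}): the $\sl_2$-triple $(h,e,f)$ can be chosen so that $\ad(h_0)$ acts as multiplication by $k$ on each $\frg_k$, whence $\frg(h_0)_k=\{x\in\frg_k:[h_0,x]=kx\}=\frg_k$ for all $k$ and, in particular, $\frg(h_0)_0=\frg_0$. Thus the auxiliary grading \eqref{eq:gh0}, which in general separates Vinberg's and Bala \& Carter's settings, here collapses onto the original grading of $\frg$.

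With $\frg(h_0)=\frg$, Proposition~\ref{prop:vinberg2BC} says at once that Vinberg's subalgebra $\frg(\frt,e)$ is a minimal Levi subalgebra of $\frg(h_0)=\frg$ containing $e$. It then remains to match it with the subalgebra $\frm$ obtained by applying the Bala \& Carter recipe \eqref{eqBC} to $\frg$ directly. The decisive point, already isolated inside the proof of Proposition~\ref{prop:vinberg2BC}, is that Vinberg's $\frt_0$, defined as a maximal toral subalgebra of $\zen_{\frg_0}(h,e,f)$, is simultaneously a maximal toral subalgebra of $\zen_{\frg}(h,e,f)$. Indeed, since $h_0\in\fra$ and $\frg(h_0)_0=\frg_0$, elements of nonzero degree cannot centralise $\fra$, so $\zen_{\frg}(h,e,f)=\zen_{\frg_0}(h,e,f)$, and $\frt_0$ is maximal toral there by construction. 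Hence $\frt_0$ is an admissible choice for the element ``$\frh_0$'' in \eqref{eqBC}, and Bala \& Carter's construction returns $\frm=\zen_{\frg}(\frt_0)$.

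Finally I would chain the identifications: using $\frg(h_0)=\frg$ and Lemma~\ref{lem:centre_t0}, which gives $\frg(\frt,e)=\zen_{\frg(h_0)}(\frt_0)$, I obtain $\frm=\zen_{\frg}(\frt_0)=\zen_{\frg(h_0)}(\frt_0)=\frg(\frt,e)$, so the two subalgebras coincide; the residual freedom in the choice of $(h,e,f)$ for a fixed $e$ produces the stated $\Zen_G(e)$-conjugacy, exactly as in Proposition~\ref{prop:vinberg2BC} via \cite[Proposition~5.3]{BC2}. I do not expect a genuine analytic obstacle here, since the corollary is morally the case $n=\infty$ of the proposition; the only thing demanding care is the bookkeeping — checking that once the two $\ZZ$-gradings are forced to agree, the single maximal toral subalgebra $\frt_0$ serves both constructions and that all centralisers are then taken in the same ambient algebra $\frg=\frg(h_0)$.
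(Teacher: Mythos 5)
Your strategy is the paper's own: derive the corollary from Proposition \ref{prop:vinberg2BC} via the identity $\frg=\frg(h_0)$, then chain $\frm=\zen_{\frg}(\frt_0)=\zen_{\frg(h_0)}(\frt_0)=\frg(\frt,e)$ using Lemma \ref{lem:centre_t0}, with the residual $\Zen_G(e)$-conjugacy coming from \cite[Proposition~5.3]{BC2}. The gap is in your justification of the identity itself. You assert that the $\sl_2$-triple $(h,e,f)$ of a given $e\in\frg_1$ ``can be chosen so that $\ad(h_0)$ acts as multiplication by $k$ on each $\frg_k$'', i.e.\ so that $h$ equals twice the defining element $d$ of the given $\ZZ$-grading. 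No such choice exists in general: $h$ is essentially rigid (any two choices of $h$ for the same $e$ differ by an element of $[e,\zen_\frg(e)]$, hence are conjugate under the unipotent part of the stabiliser of $e$), whereas $2d-h$ is typically a nonzero \emph{semisimple} element. Concretely, let $\frg=\sl(3,\CC)$ with the principal grading, so $d=\diag(1,0,-1)$, and let $e=E_{12}\in\frg_1$. Every $\sl_2$-triple $(h,e,f)$ with $h\in\frg_0$, $f\in\frg_{-1}$ has $h=\diag(1,-1,0)\neq 2d$, and then $\frg(h_0)_1=\CC E_{12}\subsetneq\frg_1$ and $\frg(h_0)_2=0\subsetneq\frg_2$, so $\frg(h_0)\subsetneq\frg$. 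Your subsequent assertions $\frg(h_0)_0=\frg_0$ and $\zen_\frg(h,e,f)=\zen_{\frg_0}(h,e,f)$ are consequences of the assumed equality, so they inherit the same defect.

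What is true --- and what the paper leans on by citing \cite[Remark~33]{dfg15} --- is that $\frg=\frg(h_0)$ holds when the $\ZZ$-grading under consideration is the one defined by $h_0$ itself, i.e.\ the rescaled Jacobson--Morozov grading attached to $e$; this is the intended reading (compare the closing paragraph of Section \ref{secBC}, where Bala \& Carter's ungraded setting is recovered by imposing exactly this grading), and under it your remaining steps are correct and simply re-run the proof of Proposition \ref{prop:vinberg2BC}. For a genuinely arbitrary $\ZZ$-grading the conclusion of the corollary still holds --- in the $\sl_3$ example above both constructions return $\zen_\frg(\diag(1,1,-2))$ --- but a proof then needs two facts your argument does not supply: that $\frt_0$ is a maximal toral subalgebra of $\zen_\frg(h,e,f)$ and not merely of $\zen_{\frg_0}(h,e,f)$ (a rank argument, using that semisimple elements of a $\ZZ$-graded reductive algebra concentrate in degree $0$), and that the minimal Levi $\frm=\zen_\frg(\frt_0)$ is automatically contained in $\frg(h_0)$, which follows because $e$ is distinguished in $[\frm,\frm]$, forcing the grading element of $[\frm,\frm]$ to coincide with $h_0$. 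So: same route as the paper, but the pivotal step $\frg=\frg(h_0)$ is mis-justified, and as stated it would fail for gradings not induced by $h_0$.
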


Even in the situation where $\frg$ is given without a grading and $e$ is a nonzero
nilpotent element in $\frg$, a choice of $h_0$ induces a $\ZZ$-grading on
$\frg$ to which Vinberg's approach can be applied; this is then
equivalent to Bala \& Carter's approach; note that Bala \& Carter use the
element $h$ rather than $h_0=\frac{1}{2}h$ to define their grading, which
leads to an additional factor two in the degrees.

\section{Gyoja's construction}\label{secGyoja}
\noindent Gyoja \cite{gyoja96} described constructions of \'etale modules out of a given prehomogeneous module.  Let $G$ be a complex reductive algebraic group with algebraic representation
$\rho\colon G\to\GL(V)$ on a finite-dimensional complex vector space $V$ such that
$(G,\rho,V)$ is a prehomogeneous module. Let $v\in V$ be the point in generic position. The construction in \cite[Theorem A]{gyoja96} proceeds as follows: Let $G_v$ be the stabilizer of $v$ in $G$ with Cartan subgroup $T\leq G_v$, define $G'=\Nor_G(T)/T$, and let $V'=V^T$
be the set of fixed points in $V$ under $T$; then $V'$ is an \'etale module for the induced action of $G'$. Arising from a normaliser of a torus, $G'$ is a reductive algebraic group.  
The second construction  \cite[Theorem~B]{gyoja96}  yields  a procedure
to obtain a super-\'etale module from an \'etale module. (This means that the stabiliser of the point in generic position is trivial and not just finite.) Given an \'etale module $(G,\rho,V)$
with  $v\in V$ in generic position and stabilizer $G_v$, choose  $1\neq h\in G_v$, and let $G''=\Nor_{G}(h)$ and $V''=V^h$, the set of fixed points for $h$ in $V$. Then $V''$ is an \'etale module for the induced action of $G''$, and $|G''_v|<|G_v|$. Since $\langle h\rangle$ is finite,  $G''$ is also reductive. After finitely many iterations (with  $G''$ instead of $G$), one obtains a super-\'etale module.

\subsection{Relation to carrier algebras}\label{secRelation2}
Gyoja's construction  was formulated for groups, but can just as well be
formulated for the corresponding Lie algebras. We show that this covers some of Vinberg's constructions. For this let $\g$ be a semisimple Lie algebra with nonzero nilpotent $e\in \g$, let $(h,e,f)$ be an $\sl_2$-triple in $\g$, and furnish $\g$ with the $\Z$-grading induced by $\ad(h_0)$, where $h_0=\tfrac{1}{2}h$. 

\begin{proposition}\label{propGyoja}
Gyoja's construction, applied to the reductive Lie algebra $\g_0$, the nilpotent element $e$, and the adjoint $\frg_0$-module $(\ad,\frg_1)$, produces Vinberg's \'etale representation associated with $e\in\g$.
\end{proposition}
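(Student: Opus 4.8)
The plan is to run Gyoja's first construction (\cite[Theorem A]{gyoja96}) at the level of Lie algebras, as permitted by the opening remark of this subsection, and to match each object it outputs against a graded piece of Vinberg's carrier algebra via Lemma~\ref{lem:centre_t0}. Write $\fra=\langle h,e,f\rangle\cong\sl_2$ and let $G_0$ be the adjoint group of $\g_0=\zen_\g(h)$, acting on $\g_1=\{x\in\g:[h,x]=2x\}$ by $\ad$. Since the $\ad(h_0)$-grading is defined on all of $\g$, here $\g(h_0)=\g$, so Lemma~\ref{lem:centre_t0} gives $\g(\t,e)=\zen_\g(\frt_0)$, where $\t=\CC h\oplus\frt_0$ and $\frt_0$ is a maximal toral subalgebra of $\zen_{\g_0}(\fra)$ as in Vinberg's construction. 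Before applying Gyoja, I would check that $(\g_0,\ad,\g_1)$ is prehomogeneous with $e$ in generic position: decomposing $\g$ into irreducible $\fra$-modules, the map $\ad(e)\colon\g_0\to\g_1$ is surjective, since on each summand it carries the $\ad(h)$-weight-$0$ space onto the weight-$2$ space, so $[\g_0,e]=\g_1$ and the $G_0$-orbit of $e$ is Zariski-open.

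Next I would determine the stabiliser and its Cartan subgroup. The stabiliser subalgebra is $\zen_{\g_0}(e)=\zen_\g(h)\cap\zen_\g(e)$, and a vector of $\ad(h)$-weight $0$ annihilated by $e$ spans a trivial $\fra$-submodule, hence lies in $\zen_\g(\fra)$; thus $\zen_{\g_0}(e)=\zen_\g(\fra)=\zen_{\g_0}(\fra)$. Being the centraliser of the reductive subalgebra $\fra$, this is reductive, so the stabiliser $G_{0,e}$ is reductive, its Cartan subgroup $T$ is a maximal torus, and $\mathrm{Lie}(T)$ is a maximal toral subalgebra of $\zen_{\g_0}(\fra)$. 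Up to $G_{0,e}$-conjugacy this is exactly Vinberg's $\frt_0$, so I may take $\mathrm{Lie}(T)=\frt_0$.

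It then remains to identify Gyoja's output with Vinberg's data. As $\frt_0$ is toral, its normaliser and centraliser in $\g_0$ coincide, whence $\mathrm{Lie}(G')=\mathrm{Lie}(\Nor_{G_0}(T)/T)=\zen_{\g_0}(\frt_0)/\frt_0$, while $V'=\g_1^T=\zen_{\g_1}(\frt_0)$. Intersecting $\g(\t,e)=\zen_\g(\frt_0)$ with the degree-$0$ and degree-$1$ components yields $\zen_{\g_0}(\frt_0)=\g(\t,e)_0$ and $\zen_{\g_1}(\frt_0)=\g(\t,e)_1$; since the centre $\frt_0=\zen(\g(\t,e))$ sits in degree $0$, these equal $\frc(e)_0\oplus\frt_0$ and $\frc(e)_1$, respectively. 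Therefore $\mathrm{Lie}(G')\cong\frc(e)_0$ and $V'=\frc(e)_1$, and $G'$ acts by $\ad$, which is precisely Vinberg's \'etale representation $(\frc(e)_0,\ad,\frc(e)_1)$.

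The step I expect to be the main obstacle is the passage from Gyoja's group-theoretic datum---a Cartan subgroup of the stabiliser $G_{0,e}$---to Vinberg's Lie-theoretic torus $\frt_0$. This rests on showing that the stabiliser is reductive and equals $\zen_\g(\fra)$, so that its Cartan subgroup is a genuine maximal torus with Lie algebra $\frt_0$, and on tracking the conjugacy ambiguities in both constructions so that the two \'etale representations agree on the nose for matched choices, and otherwise up to $G_0$-conjugacy.
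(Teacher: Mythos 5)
Your proof is correct and takes essentially the same route as the paper: you identify Vinberg's $\frt_0$ with Gyoja's torus via $\zen_{\g_0}(e)=\zen_{\g_0}(\fra)$ (which the paper cites from Vinberg while you rederive it by $\sl_2$-representation theory, along with the prehomogeneity of $(\g_0,\ad,\g_1)$), and then apply Lemma~\ref{lem:centre_t0} together with the fact that $\zen(\g(\t,e))=\frt_0$ lies in degree $0$ to conclude $\mathrm{Lie}(G')\cong\frc(e)_0$ and $V'=\frc(e)_1$. Your explicit splitting $\g(\t,e)_0=\frt_0\oplus\frc(e)_0$ is, if anything, slightly cleaner than the paper's formulation of that final step.
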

\begin{proof}The stabiliser algebra of $e$ is $\zen_{\g_0}(e)$. We have shown that  $\frt_0=\ker\lambda$ (as introduced in Lemma~\ref{lem:centre_t0}) is a maximal toral subalgebra of $\zen_{\frg_0}(\fra)$, where $\fra$ is the subalgebra spanned by $(h,e,f)$. By \cite[p.~21]{vinberg82}, we have $\zen_{\frg_0}(\fra)=\zen_{\frg_0}(e,h)$, and since $h=2h_0\in \frg_0$, it follows that $\zen_{\frg_0}(\fra)=\zen_{\frg_0}(e)$. Thus, a maximal toral subalgebra of $\zen_{\frg_0}(e)$ is $\frt_0=\ker\lambda$, which takes the place of Gyoja's~$\frt$. Now Lemma \ref{lem:centre_t0} shows that the fixed point set of $\frt_0$ in $\frg_1$ is $V'=\zen_{\frg}(\frt_0)\cap\frg_1=\frg(\frt,e)_1$, with $\frt=\CC h\oplus \frt_0$; moreover, $\zen_{\frg_0}(\frt_0)=\frg(\frt,e)_0$ is reductive with centre $\frt_0$. Hence, $\frg'=\zen_{\frg_0}(\frt_0)/\frt_0$ satisfies $\frg'\cong[\frg(\frt,e)_0,\frg(\frt,e)_0]$, so Gyoja's $\frg'$ is the
  $0$-component of the carrier algebra of $e$ in $\g$. Lastly, $V'=\g(\t,e)_1=[\g(\t,e),\g(\t,e)]_1$ is the 1-component of that carrier algebra: this follows since the centre of the reductive $\g(\t,e)$ is contained in $\g(\t,e)_0$.
\end{proof}

Gyoja's result encapsulates what is interesting to us in Vinberg's and
Bala \& Carter's theory: the focus on the \'etale action of the reductive subalgebra
of degree $0$ on the subspace of degree $1$, ignoring the subspaces of higher
degree.

\enlargethispage{1ex}

{
}
\end{document}